\numberwithin{equation}{section}
\theoremstyle{plain}
\newtheorem{thm}{Theorem}[section]
\newtheorem{lem}[thm]{Lemma}
\newtheorem{cor}[thm]{Corollary}
\newtheorem{question}[thm]{Question}
\theoremstyle{definition}
\theoremstyle{remark}
\begin{document}
	
\title[A remark on the Hardy-Littlewood maximal operators]
	{A remark on the  Hardy-Littlewood maximal operators}
	
\author{Wu-yi Pan}

\address{Key Laboratory of Computing and Stochastic Mathematics (Ministry of Education),
		School of Mathematics and Statistics, Hunan Normal University, Changsha, Hunan 410081, P.
		R. China}
	
\email{pwyyyds@163.com}

\date{\today}
	
\keywords{Centered maximal function, Non-centered maximal function, Ultrametric space.}

\subjclass[2020]{42B25}
	
\begin{abstract}

We investigate the magnitude relation of the non-centered Hardy-Littlewood maximal operators and centered one. By using a discretization technique, we prove two facts: the first one is that the space is ultrametric if and only if  the two maximal operators are identical for all discrete measure;  the second is, the uncentred maximal operator is strictly greater than the centered one if $(M,d_g)$ is a Riemannian manifold and $\mu$ is the Riemannian volume measure.

\end{abstract}
	
\maketitle
\section{Introduction} 
Let $(X,d)$ denote a metric space and $\mu$ a (positive Borel) measure on $X$.  We use the symbol $B$ to denote an closed ball.  To facilitate research, we will assume that the measure of each ball is finite and the support of $\mu$ is nonempty. We consider the \emph{Hardy–Littlewood maximal function} centered and non-centered on $X$ and, for $f$ 
locally $\mu$-integrable,
\[
M^c_{\mu} f\stackrel{\rm def}{=} \sup_{r} \frac{1}{\mu(B(x,r))}\int_{B(x,r)} fd\mu
;\quad M  _{\mu} f\stackrel{\rm def}{=} \sup_{B: B\ni x} \frac{1}{\mu(B)}\int_B fd\mu.
\]
 We will write them simply $M^c f$, $Mf$ when no confusion can arise. 
It is easy to check that maximal functions are defined everywhere in the support of $\mu$. In order to avoid trivialities, we are required to $\int_B fd\mu/\mu(B)=0$ if $\mu(B)=0$. 

 In an ultrametric space, every point in a ball is a center, and so, all maximal functions coincide.  When is the converse true?  To be more specific, the question is that the statement:

\begin{question}\label{qe:1.1}
 Whether coincidence of the two operators $($in the support of the measure$)$ implies that the metric is ultra? If not, what is the structure of the metric inherited from the parent space on the support? Is it ultra?
\end{question}
Regretfully, a simple delta in a non-ultrametric space provides a counter-example to the first issue.
Here, we solve Question \ref{qe:1.1} partly. Indeed, we show 

\begin{thm}\label{thm:1.2}
	Let $(X,d)$ be a metric space. The metric $d$ is ultra iff the two maximal operators are the same for every discrete measure on $X$.
\end{thm}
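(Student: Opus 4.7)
The plan is to handle the two directions separately. The ``if'' direction, that an ultrametric $d$ forces equality of the two operators, is a formal consequence of the strong triangle inequality. The ``only if'' direction I would prove by contrapositive: whenever $d$ fails to be ultrametric, I will exhibit an explicit three-point discrete measure and test function witnessing strict inequality at a support point.

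For the forward direction I would invoke the standard fact that in an ultrametric space, $x \in B(y,r)$ implies $B(y,r) = B(x,r)$ as sets, since any $z \in B(y,r)$ satisfies $d(x,z) \leq \max(d(x,y), d(y,z)) \leq r$. Hence every ball appearing in the supremum defining $Mf(x)$ is simultaneously a ball of the same radius centered at $x$, and so $Mf(x) \leq M^c f(x)$; the reverse inequality is trivial. This works for every Borel measure, not merely discrete ones.

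For the converse, suppose $d$ is not ultrametric and pick $a, b, c$ with $d(a,c) > \max(d(a,b), d(b,c))$. Relabelling $a$ and $c$ if necessary, arrange $d(a,b) \leq d(b,c) =: \beta$, and set $\gamma := d(a,c) > \beta$. Take $\mu = \delta_a + \delta_b + \delta_c$ and $f = \chi_{\{c\}}$. The key asymmetry is that the off-centered ball $B(c,\beta)$ contains $b$ (since $d(b,c) = \beta$) but excludes $a$ (since $d(a,c) = \gamma > \beta$), whereas any centered ball $B(b,r)$ containing $c$ must have $r \geq \beta$ and therefore automatically also contains $a$ (because $d(a,b) \leq \beta$). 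Averaging $f$ over these balls yields $Mf(b) \geq 1/2$ but $M^c f(b) = 1/3$, so the two operators disagree at the support point $b$, contradicting the hypothesis.

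I do not anticipate a serious obstacle. Once the triple $(a,b,c)$ is oriented correctly --- with $b$ the vertex opposite the long edge $ac$, and $c$ used as the center of the witnessing off-centered ball --- the proof reduces to a brief bookkeeping of which of the three mass points lies in which ball. The only small check is that no larger radius $r > \beta$ can push $M^c f(b)$ above $1/3$, which is immediate since enlarging a centered ball can only add support mass to the denominator while keeping the numerator equal to $\mu(\{c\})=1$.
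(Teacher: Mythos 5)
Your proof is correct. Both you and the paper reduce the converse to the same three-point configuration --- points $a,b,c$ with $d(a,c)>\max(d(a,b),d(b,c))$ and the atomic measure $\delta_a+\delta_b+\delta_c$ --- and the ball count you perform (the off-centered ball $B(c,\beta)$ carries mass $2$ and contains $b$, while every centered ball at $b$ that reaches $c$ carries mass $3$) is exactly the asymmetry the paper exploits, in its notation the inequality $\nu B(y,d(x,y))=2\neq 3=\nu B(x,d(x,y))$. The difference is in how the strict inequality between the two maximal functions is certified. The paper routes through its Lemma \ref{lem:2.2}, whose estimate $\mu B(y,d(x,y))\leq\inf_{B\ni x,y}\mu B$ is proved for arbitrary measures by testing with normalized indicators of shrinking sets and passing to a weak limit via Lemma \ref{lem:2.1}; specializing to the discrete measure then produces the contradiction. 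You instead take $f=\chi_{\{c\}}$, which for an atomic measure is already an honest $L^1$ function, and compute both maximal functions at $b$ directly, obtaining $Mf(b)\geq 1/2>1/3=M^cf(b)$. For this theorem your route is more elementary and self-contained: the weak-convergence discretization is unnecessary when the limiting Dirac mass is itself an atom of $\mu$, whereas the paper keeps the lemma in general form because it is reused for Theorem \ref{thm:1.3}. Your forward direction (every ball in an ultrametric space is centered at each of its points, so the non-centered supremum runs over a subfamily of centered balls) is the same observation the paper dismisses as clear. One small point worth making explicit in your write-up: the three points $a,b,c$ are automatically distinct, since if any two coincided the strict inequality $d(a,c)>\max(d(a,b),d(b,c))$ would fail, so $\mu$ genuinely has three unit atoms and the counts $2$ and $3$ are as claimed.
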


In addition, we also show a rigid result revealing how the size relation of maximal functions changes  affect the geometric structure of the support of the measure.  
\begin{thm}\label{thm:1.3}
	If there is a continuous measure $\mu$ such that $M^c_{\mu}f=M_{\mu}f$ for all $f\in L^{1}_{\operatorname {loc}}(\mu),$ then there does not exist a series of the point $\{x_{n}\}_{n\geq 1}\subseteq \operatorname {supp}(\mu)$ so that each point in the sequence is the midpoint of two other different points in the sequence and $\{x_{n}\}_{n\geq 1}$ has a limit that is also in $\operatorname {supp}(\mu)$. 
\end{thm}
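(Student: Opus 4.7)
The plan is to argue by contrapositive: assuming such a sequence $\{x_n\}_{n\geq 1}\subseteq\operatorname{supp}(\mu)$ with limit $x_\infty\in\operatorname{supp}(\mu)$, I will exhibit $f\in L^{1}_{\operatorname{loc}}(\mu)$ and $x_0\in\operatorname{supp}(\mu)$ for which $M_\mu f(x_0)>M^c_\mu f(x_0)$. Write $r_n:=d(x_n,x_{i(n)})=d(x_n,x_{j(n)})$, where $x_n$ is the midpoint of the distinct sequence points $x_{i(n)},x_{j(n)}$. I first extract a clean midpoint triple via a dichotomy on $r_n$: if $\liminf_n r_n>0$, then, since only finitely many sequence terms stay outside any fixed neighborhood of $x_\infty$, pigeonholing the partners forces a subsequence along which $(x_{i(n)},x_{j(n)})$ equals a constant pair $(x_k,x_l)$ with $d(x_n,x_k)=d(x_n,x_l)=d(x_k,x_l)/2=:r_0$; passing to the limit and using continuity of $d$, $x_\infty$ is itself a midpoint of $(x_k,x_l)$. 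Otherwise a subsequence has $r_n\to 0$, supplying analogous triples at shrinking scales. Either way, I obtain a triple $(a,m,b)\subseteq\operatorname{supp}(\mu)$ with $m$ a midpoint of $(a,b)$ at common distance $r_0>0$.

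The test function will be $f_\epsilon:=\chi_{B(a,\epsilon)}$ for small $\epsilon\in(0,r_0)$, evaluated at $x_0:=m$. Since the non-centered ball $B(a,r_0)$ contains both $m$ (on its boundary) and the support of $f_\epsilon$,
\[
 M_\mu f_\epsilon(m) \;\geq\; \frac{\mu(B(a,\epsilon))}{\mu(B(a,r_0))}.
\]
For the centered side, the triangle inequality forces $d(y,m)\geq r_0-\epsilon$ on $\operatorname{supp}(f_\epsilon)$, so only radii $s\geq r_0-\epsilon$ contribute; splitting into the full-overlap range $s\geq r_0+\epsilon$ (where $B(a,\epsilon)\subseteq B(m,s)$) and the partial/annular range $s\in[r_0-\epsilon,r_0+\epsilon)$ (where the overlap sits in the shell $\{y:r_0-\epsilon\leq d(y,m)\leq r_0+\epsilon\}$) yields
\[
 M^c_\mu f_\epsilon(m) \;\leq\; \max\Bigg\{\frac{\mu(B(a,\epsilon))}{\mu(B(m,r_0+\epsilon))},\; 1-\frac{\mu(B(m,r_0-\epsilon)^{\circ})}{\mu(B(m,r_0+\epsilon))}\Bigg\}.
\]
The geometric engine producing strict inequality is that the other partner $b$ satisfies $d(b,a)=2r_0$ and $d(b,m)=r_0$, so for $\delta<\min(\epsilon,r_0)$ the neighborhood $B(b,\delta)$ lies in $B(m,r_0+\epsilon)\setminus B(a,r_0)$ and carries positive $\mu$-mass (as $b\in\operatorname{supp}(\mu)$), producing a definite excess $\mu(B(m,r_0+\epsilon))>\mu(B(a,r_0))$ that pushes the full-overlap upper bound strictly below the lower bound on $M_\mu f_\epsilon(m)$.

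The main obstacle is twofold. First, even under continuity of $\mu$, the sphere $\{y:d(y,m)=r_0\}$ may carry positive $\mu$-measure, which would block the annular term from vanishing as $\epsilon\to 0$ at the chosen scale; I handle this by exploiting that the distance-pushforward $d(\cdot,m)_{\ast}\mu$ has at most countably many atoms, and then perturbing the scale---in Case A by selecting $n$ along the $r_n\to 0$ subsequence so that $r_n$ avoids the atomic scales, and in Case B by shifting $m=x_\infty$ to a nearby midpoint $x_n$ of $(x_k,x_l)$ still in $\operatorname{supp}(\mu)$ and to a perturbed generic scale. Second, the $b$-side gain must be shown to dominate any compensating $a$-side loss $\mu(B(a,r_0)\setminus B(m,r_0+\epsilon))$; here the iterated midpoint structure of the entire sequence (every $x_n$ is itself a midpoint of two further sequence points) is invoked to rule out the ``one-sided'' mass placements that could allow such cancellation, forcing enough symmetry around $m$ for the excess to persist.
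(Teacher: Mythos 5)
Your overall strategy---converting the midpoint structure into a strict gap between $M_\mu$ and $M^c_\mu$ by testing against characteristic functions of small balls near one endpoint---is in spirit the mechanism behind the paper's Lemma \ref{lem:2.2}, but the proposal has a genuine gap exactly where you flag it in your last sentence: you never establish that the ``$b$-side gain'' dominates the ``$a$-side loss'', i.e.\ that $\mu(B(m,r_0+\epsilon))$ strictly exceeds $\inf_{B\ni a,m}\mu(B)$. The appeal to ``the iterated midpoint structure \dots forcing enough symmetry around $m$'' is not an argument, and I do not see how to make it one: for a single midpoint triple $(a,m,b)$ in the support of a continuous measure there is no reason for any such strict inequality, and the theorem is not provable from one triple --- the convergent infinite family is essential, yet in your Case A the limit point $x_\infty$ plays no role at all. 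The paper obtains strictness from a completely different source: Lemma \ref{lem:2.2} converts the hypothesis $M^c_\mu=M_\mu$ into the \emph{non-strict} inequality $\mu B(y,d(x,y))\leq\inf_{B\ni x,y}\mu B$; applied along the iterated midpoints this yields a nested decreasing sequence of balls $B_i$ with $\mu(B_i)\leq\mu(B_{i+1})$, hence all $\mu(B_i)$ equal, and since the $B_i$ shrink to the limit point $x_0\in\operatorname{supp}(\mu)$, non-atomicity forces $\mu(B_1)=\mu(\{x_0\})=0$, contradicting $x_0\in\operatorname{supp}(\mu)$. The contradiction is extracted globally from the accumulation at $x_0$, not locally from one triple; your plan is missing this mechanism entirely.

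Two further, smaller defects. First, your fix for the positive-mass sphere is unsound: the scale is rigidly $r_0=d(a,m)$, determined by the geometry, so it cannot be perturbed, and the countably many candidate centers and scales supplied by the sequence could all hit atoms of the relevant radial pushforwards. The paper sidesteps the annular term by supporting the test function in $U(a,\delta)\setminus U(m,d(a,m))$, so that centered balls at $m$ of radius less than $d(a,m)$ see no mass at all. Second, for a continuous measure your lower bound $\mu(B(a,\epsilon))/\mu(B(a,r_0))$ tends to $0$ as $\epsilon\to 0$, so the comparison only makes sense after normalizing by $\mu(B(a,\epsilon))$ (equivalently, letting the normalized measures converge weakly to $\delta_a$ and invoking Lemma \ref{lem:2.1}); this is repairable but must be said.
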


It provides a verification of intuition that the non-center maximal operator is strictly greater than centered one for a large class of common spaces. Clearly, Theorem \ref{thm:1.3} implies

\begin{cor}
	Let $(M,d_g)$ be a Riemannian manifold, where the metric $d_g$ induced by the inner product $g$. Then for each continuous measure $\mu$ satisfying that the support of the measure $\mu$ contains a geodesic curve, there exists a function $f\in L^{1}(\mu)$ such that its uncentred maximal function and centered maximal function take different values at least at one point.
\end{cor}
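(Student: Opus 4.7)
The corollary is the contrapositive of Theorem~\ref{thm:1.3}, applied to a sequence constructed along a short minimizing subsegment of the given geodesic. My plan is: (i) reduce to a subsegment of $\gamma$ on which arc-length parametrization is a metric isometry onto an interval, (ii) pull back the standard dyadic-rationals midpoint structure through $\gamma$, (iii) check that the resulting sequence accumulates in $\operatorname{supp}(\mu)$, and (iv) truncate to upgrade from $L^1_{\operatorname{loc}}$ to $L^1$.

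For (i), I would invoke the classical fact that Riemannian geodesics are locally length-minimizing, so after restricting and reparametrizing by arc length we may assume $\gamma\colon[0,L]\to M$ satisfies $d_g(\gamma(s),\gamma(t))=|s-t|$ for all $s,t\in[0,L]$; in particular $\gamma(\tfrac{s+t}{2})$ is a metric midpoint of $\gamma(s)$ and $\gamma(t)$ in the sense of Theorem~\ref{thm:1.3}. For (ii), let $D=\{Lk/2^m:m\geq1,\ 0<k<2^m\}$, enumerate $D=\{r_n\}_{n\geq1}$, and put $x_n=\gamma(r_n)\in\operatorname{supp}(\mu)$. Every $r=Lk/2^m\in D$ equals the arithmetic mean of the distinct dyadic numbers $L(2k\pm1)/2^{m+1}\in D$, so by the isometry in (i) each $x_n$ is the metric midpoint of two distinct other members of the sequence. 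For (iii), the subsequence $\gamma(L/2^n)$ converges to $\gamma(0)\in\operatorname{supp}(\mu)$, so $\{x_n\}$ has a limit point in the support. This is exactly the configuration that Theorem~\ref{thm:1.3} excludes under the hypothesis $M^c_\mu f=M_\mu f$ for every $f\in L^{1}_{\operatorname{loc}}(\mu)$; consequently that hypothesis fails, yielding some $f\in L^{1}_{\operatorname{loc}}(\mu)$ and a point $x_0$ at which $M^c_\mu f(x_0)<M_\mu f(x_0)$.

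Step (iv) is the only nontrivial cleanup. To produce a function in $L^1(\mu)$, I would truncate: fix $\varepsilon>0$ small compared to the gap $M_\mu f(x_0)-M^c_\mu f(x_0)$, pick a ball $B$ containing $x_0$ whose average of $|f|$ is within $\varepsilon$ of $M_\mu f(x_0)$, choose a ball $B_0\supset B$ large enough that the same holds for all balls relevant to $M^c_\mu f(x_0)$, and replace $f$ by $f\chi_{B_0}$. Since every ball has finite $\mu$-mass by the blanket assumption and $f$ is locally integrable, $f\chi_{B_0}\in L^1(\mu)$; the centered and uncentered maximal values of the truncation at $x_0$ agree with those of $f$ up to $\varepsilon$, so the strict inequality is preserved. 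The main obstacle I expect is precisely this truncation, since one has to rule out that the supremum defining $M_\mu f(x_0)$ is realized only by balls of arbitrarily large radius; the metric-midpoint identification in (i) and the dyadic bookkeeping in (ii)--(iii) are otherwise mechanical.
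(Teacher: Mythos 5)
Your steps (i)--(iii) are exactly the content the paper intends: the paper offers no written proof of the corollary beyond the word ``Clearly,'' and the substance is precisely the observation that an arc-length-parametrized minimizing geodesic segment is a metric isometry onto an interval, so the dyadic points on it form a sequence in $\operatorname{supp}(\mu)$ in which every term is the metric midpoint of two other terms and which accumulates in the support, contradicting Theorem~\ref{thm:1.3}. That part of your argument is correct and complete.

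Step (iv), however, is both unnecessary and, as written, has a gap. The gap: there is no single ball $B_0$ containing ``all balls relevant to $M^c_\mu f(x_0)$,'' since the centered supremum ranges over arbitrarily large radii, and for a signed $f$ one cannot conclude that $M^c_\mu(f\chi_{B_0})(x_0)\leq M^c_\mu f(x_0)+\varepsilon$; the reduction to $f\geq 0$ is also not automatic, because replacing $f$ by $|f|$ can increase $M^c_\mu f(x_0)$ more than $M_\mu f(x_0)$ and destroy the strict inequality. (For nonnegative $f$ the truncation $f\chi_B$ over a single near-optimal uncentered ball $B$ does work, since then $M^c_\mu(f\chi_B)\leq M^c_\mu f$ pointwise.) The reason the step is unnecessary: Theorem~\ref{thm:1.3} is proved solely via Lemma~\ref{lem:2.2}, whose hypothesis is the coincidence $M^c_\mu f=M_\mu f$ only for $f\in L^1(\mu)$ --- indeed only for the explicit nonnegative normalized indicators $f_\delta$, which lie in $L^1(\mu)$ because closed balls have finite measure by the standing assumption. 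So the contrapositive already hands you a counterexample $f\in L^1(\mu)$ directly; you should cite Lemma~\ref{lem:2.2} (or the $L^1$ form of Theorem~\ref{thm:1.3}) rather than repair the $L^1_{\operatorname{loc}}$ statement by truncation.
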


\section{Proofs}\label{S2}
 The complete proof of the following can be found in \cite[Lemma 3.1]{PD22}. See also \cite {Ko15}.

\begin{lem}\label{lem:2.1}
Define the action of $M^c$ on a finite Borel measure  by \begin{equation*}
	M^c_{\mu} \nu\stackrel{\rm def}{=} \sup_{r} \frac{\nu(B(x,r))}{\mu(B(x,r))},
\end{equation*}
for every $x\in \operatorname {supp}(\mu)$. $M_\mu\nu$ is similarly considered.  

Consider a sequence of finite Borel measures $\nu_n$ on $X$.
If $\nu_n$ converges
weakly to a finite Borel measure  $\nu$, then $\liminf\limits_{n \to \infty}M\nu_n(x)\geq M\nu(x)$ for all $x\in \operatorname {supp}(\mu)$. The same fact holds for $M^c$.
 
\end{lem}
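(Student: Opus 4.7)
The plan is to reduce the lemma to a single-ball statement: for every closed ball $B\ni x$ with $\mu(B)>0$, I would show $\liminf_{n}M\nu_n(x)\geq \nu(B)/\mu(B)$, after which taking the supremum over admissible $B$ yields $\liminf_{n}M\nu_n(x)\geq M\nu(x)$. The principal tool is one direction of the Portmanteau characterization of weak convergence, namely $\liminf_{n}\nu_n(U)\geq \nu(U)$ for every open set $U$.

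Fix $B=B(y,r)\ni x$ with $\mu(B)>0$. Since closed balls are not automatically $\nu$-continuity sets, I would swell $B$ by $\epsilon>0$ and exploit the sandwich
\[
B(y,r)\;\subseteq\;B^{\circ}(y,r+\epsilon)\;\subseteq\;B(y,r+\epsilon),
\]
where $B^{\circ}(y,r+\epsilon)$ denotes the open ball. The outer closed ball still contains $x$, hence is admissible in the supremum defining $M\nu_n(x)$, while the middle open set has $\nu$-mass at least $\nu(B(y,r))$. Portmanteau then gives
\[
\liminf_{n}\nu_n\bigl(B(y,r+\epsilon)\bigr)\;\geq\;\liminf_{n}\nu_n\bigl(B^{\circ}(y,r+\epsilon)\bigr)\;\geq\;\nu\bigl(B^{\circ}(y,r+\epsilon)\bigr)\;\geq\;\nu(B(y,r)),
\]
and therefore
\[
\liminf_{n}M\nu_n(x)\;\geq\;\frac{\nu(B(y,r))}{\mu(B(y,r+\epsilon))}.
\]

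To conclude, I would let $\epsilon\to 0^{+}$. The closed balls $B(y,r+\epsilon)$ are nested, each has finite $\mu$-measure by the standing assumption, and their intersection is $B(y,r)$; continuity of $\mu$ from above therefore yields $\mu(B(y,r+\epsilon))\to\mu(B(y,r))$. Hence $\liminf_{n}M\nu_n(x)\geq \nu(B)/\mu(B)$, and taking the supremum over admissible $B\ni x$ proves the claim for $M$. The argument for $M^c$ is verbatim with $y=x$, since every swollen ball $B(x,r+\epsilon)$ remains centered at $x$ and so is admissible in the centered supremum.

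The only delicate point is that closed balls need not be continuity sets of $\nu$, which forbids a direct use of Portmanteau and forces the $\epsilon$-swelling together with the continuity-from-above step on $\mu$; aside from this, the proof is routine lower-semicontinuity bookkeeping combining weak convergence on open sets, monotonicity of $\nu_n$, and the finiteness of $\mu$ on balls.
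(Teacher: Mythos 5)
Your proof is correct and complete: the $\epsilon$-swelling of the closed ball, the Portmanteau lower bound $\liminf_n\nu_n(U)\ge\nu(U)$ on the intermediate open ball, and continuity of $\mu$ from above (licensed by the standing assumption that balls have finite $\mu$-measure, and by the fact that $\bigcap_{\epsilon>0}B(y,r+\epsilon)=B(y,r)$) together give the single-ball bound, after which taking the supremum over balls of positive $\mu$-measure suffices under the paper's convention that balls of zero $\mu$-measure contribute $0$. The paper itself offers no proof of this lemma, deferring to \cite[Lemma 3.1]{PD22} and \cite{Ko15}; your argument is the standard one for this lower-semicontinuity statement, so there is nothing to flag.
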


For our purpose, we next apply a  discretization method to establish a crucial lemma.

\begin{lem}\label{lem:2.2}
	If there is a measure $\mu$ such that  $M^c_\mu f=M_\mu f$ for $f\in L^{1}(\mu)$, then for all pairs of points $(x, y)\in  \operatorname {supp}(\mu)$, we have
	\begin{equation}\label{eq:2.1}
\mu B(y,d(x,y)) 	\leq	\inf_{B\ni x,y} \mu B.
	\end{equation}
Furthermore, $\mu B(y,d(x,y))=\mu B(x,d(x,y))$.
\end{lem}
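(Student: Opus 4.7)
The approach is to test the hypothesis $M^c_\mu f = M_\mu f$ against indicators, or approximate indicators, of the single point $x$, evaluating both maximal operators at $y$.

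If $\mu(\{x\}) > 0$, take $f = \mathbf{1}_{\{x\}} \in L^1(\mu)$; a direct calculation gives
$M^c_\mu f(y) = \sup_{r} \mu(\{x\} \cap B(y,r))/\mu(B(y,r)) = \mu(\{x\})/\mu(B(y, d(x,y)))$,
the supremum being attained at the least $r$ for which $x \in B(y,r)$, namely $r = d(x,y)$; while
$M_\mu f(y) = \sup_{B \ni x, y} \mu(\{x\})/\mu(B) = \mu(\{x\})/\inf_{B \ni x, y} \mu(B)$.
Equating these via the hypothesis and cancelling $\mu(\{x\}) > 0$ yields $\mu(B(y, d(x,y))) = \inf_{B \ni x, y} \mu(B)$ immediately.

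When $\mu(\{x\}) = 0$, I would replace $\mathbf{1}_{\{x\}}$ by $f_\epsilon = \mathbf{1}_{B(x, \epsilon)}/\mu(B(x, \epsilon)) \in L^1(\mu)$, whose associated probability measure $\nu_\epsilon = f_\epsilon d\mu$ tends weakly to $\delta_x$ as $\epsilon \to 0$ (using $x \in \operatorname{supp}(\mu)$). By hypothesis, $M^c_\mu \nu_\epsilon(y) = M_\mu \nu_\epsilon(y)$ for every $\epsilon$; applying Lemma \ref{lem:2.1} to each operator and observing $M_\mu \delta_x(y) = 1/\inf_{B \ni x, y} \mu(B)$, one obtains $\liminf_{\epsilon \to 0} M^c_\mu \nu_\epsilon(y) = \liminf_{\epsilon \to 0} M_\mu \nu_\epsilon(y) \geq 1/\inf_{B \ni x, y} \mu(B)$. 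A matching upper bound would follow from the direct estimate $M^c_\mu \nu_\epsilon(y) \leq 1/\mu(B(y, d(x,y) - \epsilon))$ (since $B(y,r) \cap B(x,\epsilon) = \emptyset$ for $r < d(x,y) - \epsilon$ while $\nu_\epsilon$ is a probability measure), passed to the limit $\epsilon \to 0$, giving $\mu(B(y, d(x,y))) \leq \inf_{B \ni x, y} \mu(B)$.

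In either case, since $B(y, d(x,y))$ itself contains both $x$ and $y$, its $\mu$-measure is automatically at least $\inf_{B \ni x, y} \mu(B)$; so equality holds. Swapping the roles of $x$ and $y$ and repeating the argument gives $\mu(B(x, d(x,y))) = \inf_{B \ni x, y} \mu(B)$ as well, which is the \emph{Furthermore} equality.

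The main obstacle is the matching upper bound in the case $\mu(\{x\}) = 0$: the estimate $1/\mu(B(y, d(x,y) - \epsilon))$ tends only to $1/\mu(B^{\circ}(y, d(x,y)))$ (the open ball), which may be strictly larger than the target $1/\mu(B(y, d(x,y)))$ whenever the sphere $\{w : d(y,w) = d(x,y)\}$ carries additional $\mu$-mass besides $x$. Resolving this likely requires either iterating the direct case on individual atoms of the sphere (each of which can be peeled off using $\mathbf{1}_{\{w\}}$), or a more refined approximation exploiting right-continuity of $r \mapsto \mu(B(y,r))$ to absorb the boundary contribution.
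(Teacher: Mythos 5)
Your argument splits into an atomic and a non-atomic case, and the atomic case (testing against $\chi_{\{x\}}$ when $\mu(\{x\})>0$) is a clean, correct shortcut that the paper does not use. But in the non-atomic case there is a genuine gap, which you yourself flag at the end: testing against $f_\epsilon=\chi_{B(x,\epsilon)}/\mu(B(x,\epsilon))$ only gives the upper bound $M^c_\mu\nu_\epsilon(y)\leq 1/\mu(B(y,d(x,y)-\epsilon))$, and as $\epsilon\to 0$ this converges to $1/\mu(U(y,d(x,y)))$, the reciprocal of the \emph{open} ball measure. You therefore only prove $\mu(U(y,d(x,y)))\leq\inf_{B\ni x,y}\mu(B)$, which is strictly weaker than \eqref{eq:2.1} whenever the sphere $\{w:d(y,w)=d(x,y)\}$ carries positive mass. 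Your proposed repairs do not close this: peeling off atoms of the sphere via the direct case cannot handle non-atomic mass on the sphere (e.g.\ a singular continuous measure concentrated on a circle through $x$ centered at $y$), and right-continuity of $r\mapsto\mu(B(y,r))$ does not help because the problem is a left-limit issue, not a right-limit one.

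The missing idea, which is exactly the paper's move, is to excise the open ball $U(y,d(x,y))$ from the support of the test function: take
$f_\delta=\chi_{U(x,\delta)\setminus U(y,d(x,y))}/\mu\bigl(U(x,\delta)\setminus U(y,d(x,y))\bigr)$.
Since every point of the support of $f_\delta$ lies at distance at least $d(x,y)$ from $y$, any centered ball $B(y,r)$ that meets this support must have $r\geq d(x,y)$, hence $\mu(B(y,r))\geq\mu(B(y,d(x,y)))$ --- the \emph{closed} ball --- and one gets $M^c f_\delta(y)\leq 1/\mu(B(y,d(x,y)))$ uniformly in $\delta$, with no boundary loss. The measures $f_\delta\,d\mu$ still converge weakly to $\delta_x$ (as $x\notin U(y,d(x,y))$), so the lower bound via Lemma \ref{lem:2.1} goes through exactly as in your proposal, and combining the two bounds yields \eqref{eq:2.1}. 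Your derivation of the ``furthermore'' equality from \eqref{eq:2.1} by symmetry is correct and is the intended one; the only essential defect is the choice of approximating functions in the non-atomic case.
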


\begin{proof}
	If the support of the measure contains only one point, the theorem follows, so we can take two different points in  $\operatorname {supp}(\mu)$. 
	For fixed different points $x,y\in \operatorname {supp}(\mu)$, choose $\delta > 0$ and let  
	\begin{equation*}
		f_\delta=\frac{ \chi_{U(x,\delta)\setminus U(y,d(x,y))}}{\mu(U(x,\delta)\setminus U(y,d(x,y)))}
	\end{equation*}
where $\chi_A$ is the characteristic function of the set $A$ and $U(x,r)$ denotes the open ball centered at $x$ with radius $r>0$. Clearly, $f_\delta \in L^1$, and $f_\delta d\mu$ converges
weakly to $\delta_x$ where $\delta_x$ denotes the Dirac delta concentrated at  $x\in X$.
	  Applying Lemma \ref{lem:2.1} with $f_\delta$, we know 
	  \begin{equation*}
	  	\liminf\limits_{\delta \to 0}Mf_\delta(z)\geq M\delta_x(z)
	  \end{equation*}
	  for all $z\in \operatorname {supp}(\mu)$.  Now	observe that\begin{equation*}
	  	M^cf_\delta(y)=\sup_{r\in[d(x,y),d(x,y)+\delta]}\frac{\mu(B(y,r)\cap U(x,\delta)\cap U^c(y,d(x,y)))}{\mu(U(x,\delta)\cap U^c(y,d(x,y)))\mu(B(y,r))}\leq \frac{1}{\mu B(y,d(x,y))}.
	  \end{equation*} 
  Thus $\limsup\limits_{\delta \to 0}M^cf_\delta(y)\leq \frac{1}{\mu B(y,d(x,y))}$. Comparing these inequalities, we conclude that 
  \begin{equation}\label{eq:2.2}
  	M\delta_x(y)\leq1/\mu B(y,d(x,y)).
  \end{equation}
 It follows from the definition that $M\delta_x(y)=1/\inf_{B\ni x,y}\mu B$. Substituted it into the above inequality \eqref{eq:2.2}, the proof is completed.
\end{proof}

The lemma  has made the function of ``facing a kick of the door". Now we are in the position of the proofs of Theorem \ref{thm:1.2} and \ref{thm:1.3}. 

\begin{proof}[Proof of Theorem \ref{thm:1.2}.]
	 The adequacy is clear. Conversely, supposing that for every discrete measure on $X$, $M^cf=Mf$ for $f\in L^{1}(\mu)$, we then need to show the metric is ultra.   We prove it by contradiction. Suppose, if possible, that the metric space $(X,d)$ is not ultrametric, then there exists three points $x,y,z$ such that $z\in B(x,d(x,y))$ and $d(z,y)> d(x,y)$. This means $z\notin B(y,d(x,y)).$ Now pick a measure $\nu=\delta_x+\delta_y+\delta_z$. Applying Lemma \ref{lem:2.2} with $\nu$, we have $\nu B(y,d(x,y))=2=3=\nu B(x,d(x,y))$. It is impossible so we have proved Theorem \ref{thm:1.2}.
\end{proof}
\begin{proof}[Proof of Theorem \ref{thm:1.3}.]
	We also argue it by contradiction. Suppose, if possible, that there exists a sequence $\{x_{n}\}_{n\geq 1}$ described in Theorem \ref{thm:1.3}. By passing to a subsequence, we can suppose that the sequence satisfies that $x_{i+2}$ is the midpoint of  $x_i$ and $x_{i+1}$  for $i\geq 1$ and $\lim\limits_{n\to \infty}x_n=x_0$ for some $x_0\in \operatorname {supp}(\mu).$ First consider the closed balls $B_i=B(x_{i+2},d(x_{i+1},x_{i+2}))$ for all $i\geq 1$. Next we use \eqref{eq:2.1} to get $\mu(B_i)\leq \mu(B_{i+1})$.  It follows from the sequence  $B_i$ is monotonically decreasing that $\mu(B_i)=\mu(B_{i+1})$, 
and hence $\mu(B_1)=\lim\limits_{i\to \infty}\mu(B_i)=\mu(\{x_0\})=0$ by the continuity of the measure. It contradicts the fact that $x_0\in \operatorname {supp}(\mu)$. This finishes the proof.
\end{proof}

%

\end{document}